\providecommand{\tabularnewline}{\\}
\numberwithin{equation}{section}
\numberwithin{figure}{section}
 \theoremstyle{definition}
 \newtheorem*{defn*}{\protect\definitionname}
\theoremstyle{plain}
\newtheorem{thm}{\protect\theoremname}
  \theoremstyle{plain}
  \newtheorem{lem}[thm]{\protect\lemmaname}
  \theoremstyle{remark}
  \newtheorem{rem}[thm]{\protect\remarkname}
  \theoremstyle{plain}
  \newtheorem{cor}[thm]{\protect\corollaryname}
  \providecommand{\corollaryname}{Corollary}
  \providecommand{\definitionname}{Definition}
  \providecommand{\lemmaname}{Lemma}
  \providecommand{\remarkname}{Remark}
\providecommand{\theoremname}{Theorem}
\begin{document}
\selectlanguage{british}%
\global\long\def\mod{\,\mathrm{mod\,\,}}
\global\long\def\li{\mathrm{li}}

\selectlanguage{american}%

\title{on the regularity of primes in arithmetic progressions}
\begin{abstract}
We prove that for a positive integer $k$ the primes in certain kinds
of intervals can not distribute too ``uniformly'' among the reduced
residue classes modulo $k$. Hereby, we prove a generalization of
a conjecture of Recaman and establish our results in a much more general
situation, in particular for prime ideals in number fields.
\end{abstract}

\author{Christian Elsholtz\and{}Niclas Technau\and{}Robert Tichy}

\thanks{The second author was supported by the Austrian Science Fund (FWF):
W1230 Doctoral Program ``Discrete Mathematics'', and the third author
was supported by the Special Research Program (SFB): SFB F 55 ``Quasi-Monte
Carlo Methods: Theory and Applications\textquotedbl{}.}

\address{Institut für Analysis und Computational Number Theory\\
Technische Universität Graz\\
Steyrergasse 30\\
A-8010\\
elsholtz@tugraz.at}

\address{Institut für Analysis und Computational Number Theory\\
Technische Universität Graz\\
Steyrergasse 30\\
A-8010\\
technau@math.tugraz.at}

\address{Institut für Analysis und Computational Number Theory\\
Technische Universität Graz\\
Steyrergasse 30\\
A-8010\\
tichy@tugraz.at}

\maketitle

\section{Introduction and Main Result}

Let $\omega\left(k\right)$ be the number of distinct prime factors
of an integer $k$, and let $\varphi$ denote Euler's totient function.
We say that $k$ is a P-integer if the first $\varphi\left(k\right)$
primes which do not divide $k$ form a complete residue system modulo
$k$. In 1978 Recaman \cite{Recam1978} conjectured that there are
only finitely many prime $P$-integers. In 1980 Pomerance \cite{Pomerance1980}
proved this by showing that there are in fact only finitely many $P$-integers,
and conjectured moreover that every $P$-integer does not exceed $30$.
This was proved in special cases by Hajdu, Saradha, and Tijdeman \cite{L.Hajdu2011,L.Hajdu2012,Saradha2011}.
In fact, they proved in \cite{L.Hajdu2012} the conjecture of Pomerance
under the assumption of the Riemann Hypothesis. Eventually, in a recent
paper of Yang and Togbé \cite{S.Yang2014} the conjecture was proven
unconditionally.

However, one can rephrase the definition of $P$-integers as follows:
Let, without further mentioning, $p$ denote a prime, $\mathbb{P}$
the set of primes, and $p_{n}$ the $n$-th smallest prime. Then $k$
is a $P$-integer if the block $p_{1},p_{2},\ldots,p_{\varphi\left(k\right)+\omega\left(k\right)}$
of the first $\varphi\left(k\right)+\omega\left(k\right)$ primes,
lying in the closed interval $\bigl[p_{1},p_{\varphi\left(k\right)+\omega\left(k\right)}\bigr]$,
has precisely one element in each reduced residue class modulo $k$,
with the exception of $\omega\left(k\right)$ primes (which lie in
distinct, non-invertible residue classes). By viewing $P$-integers
as instances of such distribution phenomena, there is an obvious and
far more general notion for this.
\begin{defn*}
\label{def: Pinteger}Let $\alpha,\beta,\gamma,\iota>0$ denote integers,
and $G=\left(G,\cdot\right)$ an arithmetical semi-group with norm
$\left|\cdot\right|$, in the sense of Knopfmacher \cite[p. 11]{Knopfmacher1990},
which takes only values in the positive integers. Consider for $k\in G$
the equivalence relation $a\sim b\,:\Leftrightarrow\left|a\right|\equiv\left|b\right|\mod\left|k\right|$
on $G$ and let $M$ denote the primes in $G$ with norm in the interval
$\left[\alpha,\beta\right]$. Then we say $k\in G$ is a $P\left(\alpha,\beta,\gamma,\iota\right)$-integer
if $M$ has in each equivalence class corresponding to an invertible
residue class modulo $\left|k\right|$ at least $\gamma$ elements,
and the remaining $\iota$ primes distribute in some arbitrary equivalence
classes such that $\left|M\right|=\gamma\varphi\left(k\right)+\iota$.
(For ease of exposition we shall simply speak of $P^{*}$-integers
if no confusion can arise.)
\end{defn*}
A natural question is to estimate for a given $k\in G$ the smallest
values of $\alpha,\beta$ such that $k$ is for the first time a $P^{*}$-integer.
Let us simplify this question by considering the semi-group $G=\mathbb{N}$
of the natural numbers, endowed with its canonical norm, and by asking
the following question: Fix $\alpha=2$ and estimate for a given $k$
the smallest integer $\beta=\beta\left(k\right)$ such that $k$ is
the first time a $P\left(2,\beta,1,\iota\right)$-integer for some
$\iota$. This problem is nothing but estimating Linnik's constant
which is widely open. Yet, the following probabilistic considerations
suggest that $\beta$ should be in $O(k\log^{2}k)$:

We start by estimating the probability $P(X)$ for a random set of
$f\left(k\right)\geq\varphi\left(k\right)$ primes to \emph{not} cover
all of the $\varphi\left(k\right)$ reduced residue classes with at
least one prime. We assume that a given prime $p$ is in an invertible
residue class modulo $k$; for otherwise $p$ divides $k$ which can
happen at most\footnote{By $\ll$ we denote the usual Vinogradov-symbol. The implied constant
is absolute unless we specify a dependency of some variable by an
appropriate subscript. We shall use the Landau-symbols in the same
way.} $\omega\left(k\right)\ll\log k$ times and picking such a prime out
of $f\left(k\right)$ arbitrary primes almost never happens. In view
of Dirichlet's Theorem on arithmetic progressions, we assume moreover
that a prime $p$ has about probability $\frac{1}{\varphi\left(k\right)}$
to be in a specific invertible residue class modulo $k$. Let $X_{r}$
denote the event that in the invertible residue class $r$ modulo
$k$ none of the $f\left(k\right)$ primes occurs. Then, writing $f\left(k\right)=C\left(k\right)\varphi\left(k\right)\log k$,
(say), the probability $P(X_{r})$ of $X_{r}$ is 
\[
\left(1-\frac{1}{\varphi\left(k\right)}\right)^{f\left(k\right)}\approx\left(1+o(1)\right)k^{-C\left(k\right)}.
\]
By utilizing the inclusion-exclusion principle, we conclude that 
\[
P(X)=P(\bigcup_{r}X_{r})\approx\sum_{r}P(X_{r})\approx\frac{\varphi\left(k\right)}{k^{C\left(k\right)}}
\]
whereas the union and the summation run through a complete residue
system $r$ modulo $k$. Hence, if $C\left(k\right)>1+\varepsilon$
for some fixed $\varepsilon>0$, we expect with a positive probability
that our $f\left(k\right)$ primes cover all invertible residue classes
at least once. On the other hand, if $C\left(k\right)<1-\varepsilon$
holds, we expect, by using the reversed Borel-Cantelli Lemma,\footnote{Cf. \cite{Chung1952}}
that $X$ is likely to occur infinitely often. Since $p_{n}\sim n\log n$,
the threshold $C=1$ amounts to the estimate $\beta\left(k\right)\approx\varphi\left(k\right)\log k\log\left(\varphi\left(k\right)\log k\right)=O\left(k\log^{2}k\right)$
for having about $\varphi\left(k\right)\log k$ primes in the interval
$\left[2,\beta(k)\right]$. This approximation was suggested by a
similar, but more complicated heuristic of Wagstaff \cite{Wagstaff1979}
and is plausible in view of various results e.g. from Turán \cite{Turan1936}.
The latter showed, assuming the Extended Riemann Hypothesis, that
for any $\delta>0$ the smallest prime $P\left(k,l\right)$ in the
invertible residue class $l$ modulo $k$ is exceeding the quantity
$\varphi\left(k\right)\log^{2+\delta}\left(k\right)$ for at most
$o\left(\varphi\left(k\right)\right)$ choices of $l$. There are
other results of this kind,\footnote{The latest record for calculating Linnik's constant, to the best of
our knowledge, is due to T. Xylouris \cite{Xylouris2011} who refined
the previous work of Heath-Brown \cite{Heath-Brown1992}. Moreover,
Heath-Brown conjectured that $P(k,l)=O(k\log^{2}k)$ holds for any
coprime pairs $l,k$ and $k$ sufficiently large.} we refer the reader to \cite{Granville1989} and the references therein.
However, there is also reason to be cautious with respect to the above
mentioned heuristic. In this direction there are, inter alia, the
results of Maier \cite{Maier1985}, Rubinstein and Sarnak \cite{Rubinstein1994},
or \cite{Knapowski1962}. 

Let us stress that for $k\in G$, where $G$ is as in Definition
\ref{def: Pinteger}, our heuristic suggests that one should need
about $\varphi\left(\left|k\right|\right)\log\left|k\right|$ primes
to cover the invertible residue classes modulo $\left|k\right|$ in
$G$ at least once with primes and not just $\varphi\left(\left|k\right|\right)+\omega\left(\left|k\right|\right)$
as one asks in Recaman's conjecture. Our first result proves, under
certain assumptions, that this is indeed the case. Furthermore, we
say $G$ satisfies Axiom $A$ (cf. \cite[p. 75]{Knopfmacher1990})
with $\delta>0$, if for some $0\leq\eta<\delta$ the counting function
$N_{G}\left(x\right):=\#\left\{ g\in G:\,\left|g\right|\leq x\right\} $
has the expansion $x^{\delta}+O(x^{\eta})$ as $x\rightarrow\infty$.
Thus we can state:
\begin{thm}
\label{thm:P integer finite in abstract setting}Let $K:=\left|k\right|$.
Let $G$ as in Definition \ref{def: Pinteger} satisfy Axiom $A$
with some $\delta>0$. Assume that numbers $\alpha=1$, $\beta\ll K\log^{a}K$
and $\iota\ll\log^{b}K$ are given for some fixed $a,b>0$ in the
case $0<\delta\leq1$ and in the case $\delta>1$ the value of $\beta$
may additionally differ from multiples of $K$ by at most $K^{1-\epsilon}$
for some absolute constant $\epsilon>0$. Then there are only finitely
many such $P^{*}$-integers. 
\end{thm}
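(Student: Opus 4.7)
The plan is to combine the prime number theorem for arithmetical semigroups satisfying Axiom~A (Knopfmacher) with an element-counting argument, and to exploit the $P^{*}$-rigidity: each invertible residue class modulo $K$ must contain between $\gamma$ and $\gamma+\iota$ primes, with $\iota\ll\log^{b}K$. The strategy is to show that this rigidity is incompatible with the natural fluctuation of primes in residue classes when $\beta$ is as small as $\ll K\log^{a}K$.

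First I would apply Knopfmacher's abstract PNT to write $|M|=\beta^{\delta}/(\delta\log\beta)\cdot(1+o(1))$; coupling this with $|M|=\gamma\varphi(K)+\iota$ and $\iota\ll\log^{b}K$ pins down $\gamma\sim|M|/\varphi(K)$, which grows only polylogarithmically in $K$ under the hypothesis $\beta\ll K\log^{a}K$. Second, using Axiom~A together with partial summation, I would count elements of $G$ whose norms lie in a prescribed residue class $r\bmod K$ on $[1,\beta]$: in the case $\delta\leq1$ the count is $\beta^{\delta}/K\cdot(1+o(1))$ per class, while for $\delta>1$ the additional hypothesis that $\beta$ differ from a multiple of $K$ by at most $K^{1-\epsilon}$ is exactly what is needed to cancel the boundary contribution from the last partial period of length less than $K$ and retain a sharp per-class count $\beta^{\delta}/K+O(K^{\delta-\epsilon})$.

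The third and most delicate step is to pass from elements to primes and force a contradiction. Pulling back the characters of $(\mathbb{Z}/K\mathbb{Z})^{*}$ to $G$ via the norm and invoking an abstract analog of Dirichlet's theorem on arithmetic progressions, one compares $\pi_{G}(\beta;K,a)$ with its average value $|M|/\varphi(K)$ across invertible classes; the $P^{*}$-hypothesis forces the variance
\[
V\;:=\;\sum_{(a,K)=1}\bigl(\pi_{G}(\beta;K,a)-|M|/\varphi(K)\bigr)^{2}\;\leq\;\varphi(K)\iota^{2}\;\ll\;\varphi(K)\log^{2b}K,
\]
whereas a suitable lower bound for $V$, obtained from a character-sum or second-moment estimate for primes of $G$, must be shown to grow faster than this upper bound for all sufficiently large $K$. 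The main obstacle I anticipate is exactly this last lower bound within the Knopfmacher framework, where the full analytic theory of Dirichlet $L$-functions is only partly developed; the case $\delta>1$ depends crucially on the boundary hypothesis on $\beta$, without which the per-class counting in Step~2 is not sharp enough for the variance argument to go through.
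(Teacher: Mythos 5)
Your proposal takes a genuinely different route from the paper, and the step you yourself flag as the ``main obstacle'' is in fact a fatal gap, not a technical nuisance. You reduce the problem to a two-sided estimate of the variance
\[
V=\sum_{(a,K)=1}\bigl(\pi_G(\beta;K,a)-|M|/\varphi(K)\bigr)^2,
\]
with the upper bound $V\ll\varphi(K)\iota^2$ coming from the $P^*$-rigidity, and you then need a lower bound for $V$ that grows strictly faster. But with $\beta\ll K\log^a K$, the ratio $\beta/K$ is only polylogarithmic, and no unconditional lower bound of the required quality for the variance of primes in residue classes modulo a single, fixed $K$ in this extremely short range is known even for $G=\mathbb{N}$; the Barban--Davenport--Halberstam and Montgomery--Hooley results concern averages over many moduli, and Tur\'an-type results that apply to a single modulus in this range are conditional on ERH. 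In the abstract Knopfmacher setting this is worse still, since you would additionally need an analogue of Dirichlet's theorem in arithmetic progressions with a usable second-moment estimate, and Axiom~$A$ alone does not supply the zero-free regions or $L$-function theory needed for that. So the plan, as stated, cannot be completed.

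The paper avoids all of this by not trying to control primes class-by-class at all. Its key observation is the symmetry of the invertible residues about $K/2$: pairing $a\leftrightarrow K-a$ shows that, for a $P^*$-integer with $\alpha=1$, the counts of $M$-primes with norm reduced into $(0,K/2]$ versus $(K/2,K)$ can differ by at most $\iota$. It then decomposes the interval $[1,\beta]$ into blocks $[jK,(j+1)K)$ and shows, using only Wegmann's PNT for $G$ (valid under Axiom~$A$), that each block contributes a bias
\[
2\,\li\bigl(\overline{x}_j^{\,\delta}\bigr)-\li\bigl(x_j^{\,\delta}\bigr)-\li\bigl(x_{j+1}^{\,\delta}\bigr)
\]
whose sign is governed by whether $x\mapsto\li(x^\delta)$ is eventually concave ($0<\delta\leq1$, bias positive) or convex ($\delta>1$, bias negative). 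Summed with the dominant boundary term $M(k)\gtrsim K^\delta/\log^2 K$ (or $\lesssim -cK^\delta/\log K$ for $\delta>1$), this produces a one-sided imbalance of order $K^\delta/\log^2 K$ that cannot be absorbed by $\iota\ll\log^b K$. The extra hypothesis for $\delta>1$ that $\beta$ be within $K^{1-\epsilon}$ of a multiple of $K$ is used precisely to control the partial block at the upper end, so that the last incomplete block contributes only $O(K^{\delta-\epsilon})$ rather than a full half-block of size $\asymp K^\delta/\log K$ of the wrong sign; you correctly guessed the role of this hypothesis, but attached it to a per-class element count rather than to the boundary contribution to $M(k)$. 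In short: the paper needs only the abstract PNT and a one-sided (half-interval) comparison, whereas your proposal asks for per-class equidistribution and a variance lower bound neither of which is available.
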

For instance, the assumptions (on the semi-group) above are satisfied
if $G$ is the set of non-zero integral ideals of a number field $\mathfrak{K}$
with the usual ideal norm. Moreover, one can also interpret the
property to be a $P^{*}$-integer as the resolvability (in the set
of primes) of a certain set of Diophantine equations and inequalities.
For determining all such solutions, it is of interest to furnish Theorem
\ref{thm:P integer finite in abstract setting} with explicit bounds
on $k$ and it might be interesting in its own right to make a qualitative
statement quantitative. We shall do so only in the case $G=\mathbb{N}$
since one needs explicit bounds for the prime counting function $\pi_{G}\left(x\right):=\#\left\{ p\in G:\,g\,\mathrm{prime}\,,\left|g\right|\leq x\right\} $,
for $x>0$, of $G$ which are only known if one has sufficient arithmetic
information about $G$. For instance, the error term in Landau's prime
ideal theorem naturally depends on the given number field. However,
once these informations are given it is a straight forward task to
extend our explicit results to more general cases. 

Loosely speaking, our main result states, in a quantitative manner,
that blocks of primes (in the natural numbers) of approximate length
$\gamma\,\varphi\left(k\right)$ are, in general, not evenly distributed
among the reduced residue classes modulo $k$. More precisely, we
prove the following extension of Recaman's conjecture:
\begin{thm}
\textcolor{red}{\label{thm: main result general Pinterger finite}}Let
$\lambda\in\mathbb{N}\cup\left\{ 0\right\} $ and $d_{1},d_{2},d_{3}$
denote strictly positive real numbers. There are only finitely many
$P(\alpha,\beta,\gamma,\iota)$-integers in $\mathbb{N}$ such that
the growth restrictions $\alpha=\lambda k+O(k^{1-d_{1}})$, $\iota=O(k^{1-d_{2}})$
and $\beta=O(k\log^{d_{3}}k)$ are satisfied.
\end{thm}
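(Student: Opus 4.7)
My plan is to argue by contradiction: assuming $k$ is a $P(\alpha,\beta,\gamma,\iota)$-integer satisfying the stated growth restrictions, I would derive enough rigidity on the prime counts to exhibit a reduced residue class modulo $k$ which cannot contain as many as $\gamma$ primes in $[\alpha,\beta]$, violating the covering condition. The proof essentially specialises Theorem~\ref{thm:P integer finite in abstract setting} to $G=\mathbb{N}$ and exploits the classical Prime Number Theorem with an effective remainder, which affords sharper numerical bounds than are available in the general Axiom-$A$ setting.

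First I would feed the hypotheses into the Prime Number Theorem to compute
\[
|M|=\pi(\beta)-\pi(\alpha-1)=(1+o(1))\,\frac{\beta-\alpha}{\log\beta}.
\]
Since $\alpha=\lambda k+O(k^{1-d_1})$ gives $\alpha\ll k$, while $\beta\ll k\log^{d_3}k$ controls the numerator, this yields $|M|\ll k\log^{d_3-1}k$. Matching this with the definitional identity $|M|=\gamma\varphi(k)+\iota$ and invoking $\iota=O(k^{1-d_2})$ together with the Mertens bound $\varphi(k)\gg k/\log\log k$ pins $\gamma$ to the range $O(\log^{d_3-1}k\cdot\log\log k)$.

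Next I would bring in the covering condition. Since every one of the $\varphi(k)$ invertible residue classes modulo $k$ is supposed to hold at least $\gamma$ primes in $[\alpha,\beta]$, summing gives $\gamma\varphi(k)\leq|M|$ and hence $\beta-\alpha\gg\gamma k\log k/\log\log k$. Juxtaposed with $\beta\ll k\log^{d_3}k$, this already forbids all sufficiently large $k$ when $d_3<1$, since $\gamma$ is a positive integer. For the remaining regime $d_3\geq 1$, the decisive ingredient is the smallness $\iota=o(\varphi(k))$, which forces at least $\varphi(k)-\iota$ invertible classes to contain \emph{exactly} $\gamma$ primes in $[\alpha,\beta]$. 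Combining this rigidity with the localisation of $\alpha$ modulo $k$ afforded by $\alpha=\lambda k+O(k^{1-d_1})$, I would exploit a sieve-theoretic upper bound of Brun-Titchmarsh type, together with a pigeonhole on the positions of the $\iota$ excess primes, to exhibit an invertible residue class whose count of primes in $[\alpha,\beta]$ falls strictly below $\gamma$, contradicting the $P^{*}$-integer hypothesis.

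The hard part will be this last step: the Prime Number Theorem provides only averaged information, and converting it into an unconditional pointwise deficiency requires care. Here the near-multiple-of-$k$ placement of $\alpha$ earns its keep by fixing the residue-class skeleton of $[\alpha,\beta]$, so that one may track precisely how many members of each invertible class land in $[\alpha,\beta]$ and how many of these can be prime. A secondary technical point is to maintain uniformity of the error terms in PNT (and in the Brun-Titchmarsh inequality) as $k$ grows, which, as the paper stresses, is exactly why the explicit treatment is carried out only in $G=\mathbb{N}$.
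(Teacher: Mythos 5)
Your reduction in the regime $d_3<1$ is fine and quick, but the decisive regime $d_3\geq 1$ is exactly where your argument has a genuine gap, and that is precisely the case the paper has to work hard on. You propose that the smallness $\iota=o(\varphi(k))$, combined with a Brun--Titchmarsh upper bound and a pigeonhole on the $\iota$ excess primes, should exhibit an invertible residue class containing fewer than $\gamma$ primes of $[\alpha,\beta]$. This does not work as stated. Brun--Titchmarsh is an \emph{upper} bound of the shape $\pi(x;k,a)\le\frac{2x}{\varphi(k)\log(x/k)}$; with $\beta\asymp k\log^{d_3}k$ the resulting per-class bound is of order $\frac{\log k}{\log\log k}$ times the average value $\gamma$, so it cannot push any class below $\gamma$. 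More fundamentally, nothing in the averaged Prime Number Theorem, nor in sieve upper bounds, nor in a pigeonhole on where the $\iota$ extra primes sit, excludes the scenario in which the $\gamma\varphi(k)+\iota$ primes of $[\alpha,\beta]$ distribute with exactly $\gamma$ in every invertible class and the remaining $\iota$ scattered arbitrarily. One needs a structural reason why such an even distribution is impossible, and your sketch does not supply one.

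The paper's route is different and supplies exactly that missing structure. Pair each invertible residue $a\pmod K$ with $K-a$; then any $P^*$-integer must satisfy that the difference $\bigl|A_1\bigr|-\bigl|A_2\bigr|$ has absolute value at most $\iota$, where $A_1$ (resp.\ $A_2$) collects the primes of $[\alpha,\beta]$ whose residue lies in $(0,K/2]$ (resp.\ $(K/2,K)$); note this test is independent of $\gamma$. One then compares, block by block, the subintervals $[jK,(j+\tfrac12)K]$ and $[(j+\tfrac12)K,(j+1)K]$: since $x\mapsto1/\log x$ is decreasing, the first half of each block carries slightly more primes than the second, so each $E_j(k)=E_{j,1}(k)-E_{j,2}(k)$ is positive, and this excess accumulates over the $\asymp\beta/k\asymp\log^{d_3}k$ blocks. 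Lemma~\ref{lem: growth of Eijk prime} and Corollary~\ref{cor: estimates for the growth the sum of wj plus or minus} quantify this as $\bigl|A_1\bigr|-\bigl|A_2\bigr|\gg k\,\log\log k/\log^2 k$ once error terms are controlled, while $\iota\ll k^{1-d_2}$ is negligible by comparison, so the near-balance condition is eventually violated. This second-order bias (effectively the concavity of $\li$) is the idea your proposal is missing; the hypothesis $\alpha=\lambda k+O(k^{1-d_1})$ is used in the paper only to control the boundary block near $\alpha$ in Corollary~\ref{cor: estimate from below for the growth main term}, not, as you suggest, to fix a residue-class skeleton for an exact per-class count.
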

The paper is organized as follows: Firstly, we deduce a necessary
condition for $g\in G$, where $G$ is always assumed to be as in
Definition \ref{def: Pinteger}, to be a $P^{*}$-integer and prove
Theorem \ref{thm:P integer finite in abstract setting}. This will
be done via a combinatorial argument which leads to inequalities involving
sums over the prime counting function $\pi$ evaluated at certain
points. Secondly, we will remove $\pi$ from these inequalities by
approximating it and then deal with the sums in such a manner that
we receive explicit formulas for seeing which large $k$ violate the
arising inequalities.

\section{Preliminaries and Proof of Theorem 1}

We first collect some results which we will need in the proofs. 
\begin{lem}[{Cf. \cite[Thm. 1]{trudgian2015updating}}]
\label{lem: estimate of chebyshev function vs. x}Let $\theta\left(x\right):=\sum_{p\leq x}\log p$
denote the Chebyshev function where the summation runs through all
primes $p\leq x$. With 
\begin{align*}
\varepsilon\left(x\right):= & \sqrt{\frac{8\log x}{17\pi\cdot\eta}}e^{-\sqrt{\eta^{-1}\log x}}\,\,\,\,\,\,\,\,\,\qquad\mbox{for }x\geq149,\eta:=6.455
\end{align*}
we have 
\[
\left|\theta\left(x\right)-x\right|<x\varepsilon\left(x\right),\qquad\mbox{for }x\geq149.
\]

\end{lem}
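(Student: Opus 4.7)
The plan is to deduce the bound by combining the Riemann--von Mangoldt explicit formula with an effective classical zero-free region for $\zeta$ and a rigorous numerical verification of the Riemann Hypothesis up to some finite height $T_0$. It is cleaner to work first with $\psi(x):=\sum_{p^{m}\leq x}\log p$, for which one has
\[
\psi(x)=x-\sum_{\rho}\frac{x^{\rho}}{\rho}-\log(2\pi)-\tfrac{1}{2}\log\bigl(1-x^{-2}\bigr),
\]
with $\rho=\beta+i\gamma$ running over non-trivial zeros of $\zeta$. The transition to $\theta$ is supplied by $\theta(x)=\psi(x)-\sum_{k\geq 2}\psi(x^{1/k})$; the correction is of size $O(\sqrt{x}\log x)$, which for $x\geq 149$ is comfortably absorbed into $x\varepsilon(x)$ once the main bound has been established.

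The heart of the argument is to estimate $|\sum_{\rho}x^{\rho}/\rho|$ explicitly. I would split the sum at a truncation height $T$. For zeros with $|\gamma|\leq T$ I would invoke the numerical verification of RH up to very large heights (in the spirit of Platt's rigorous computations), so that $\beta=\tfrac{1}{2}$ on this range; combined with the classical bound $N(T)\ll T\log T$ on the number of zeros up to height $T$, this yields a contribution of order $\sqrt{x}\,\log^{2}T$. For $|\gamma|>T$ I would invoke an explicit zero-free region of the form $\beta\leq 1-1/(R\log|\gamma|)$, where $R$ is the Ford/Kadiri--Mossinghoff--Trudgian constant that produces the numerical value $\eta=6.455$. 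Together with a summation by parts using $N(T)\ll T\log T$, this delivers a tail bound of order $x\,\sqrt{\log x}\,e^{-\sqrt{\log x/\eta}}$, up to explicit multiplicative constants.

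The decisive step is then to choose $T=T(x)$ so as to balance the two contributions: the $|\gamma|\leq T$ part grows polynomially in $\log T$ times $\sqrt{x}$, while the tail decays like $x\,e^{-c\sqrt{\log x}}$; equating them optimally is what produces the exponent $-\sqrt{\log x/\eta}$ and the prefactor $\sqrt{8\log x/(17\pi\eta)}$, with the numerical constants $8$, $17$, $\pi$ coming out of careful bookkeeping in the density-of-zeros estimate, the partial summation and the explicit formula. The main obstacle, and the part that takes the bulk of the work, is purely quantitative: one must track every constant in each inequality, verify that all error terms fit strictly (not merely asymptotically) inside $x\varepsilon(x)$, and then close the argument at the low end by checking the inequality numerically on the remaining finite range $149\leq x\leq x_{0}$, where $x_{0}$ is the threshold beyond which the asymptotic machinery self-sustains.
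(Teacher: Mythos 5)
This lemma is not proved in the paper at all: it is imported verbatim (up to notation) from Theorem~1 of Trudgian, \emph{Updating the error term in the prime number theorem}, and the ``Cf.'' in the statement is the entire argument the authors offer. Your outline does correctly reproduce the architecture of Trudgian's actual proof --- the Riemann--von Mangoldt explicit formula for $\psi$, a truncation of the zero sum at a height $T$, rigorous verification of RH below a fixed height for the low-lying zeros, an explicit zero-free region $\beta\leq 1-1/(R\log|\gamma|)$ for the tail, optimization of $T$ as a function of $x$ to produce the $e^{-\sqrt{\log x/\eta}}$ decay with the $\sqrt{\log x}$ prefactor, and a finite numerical check on the initial range. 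In that sense you have identified the right route, and it is the same route as the cited source.

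However, as a proof your text has a genuine gap: the entire content of the lemma is the explicit constants $\sqrt{8/(17\pi\eta)}$, $\eta=6.455$, and the threshold $x\geq 149$, and every step at which these would be produced is deferred to ``careful bookkeeping'' or ``up to explicit multiplicative constants.'' Nothing quantitative is actually established. One specific assertion is also too glib: the claim that the correction $\psi(x)-\theta(x)$ is ``comfortably absorbed'' into $x\varepsilon(x)$ for $x\geq 149$ is false near the lower endpoint. At $x=149$ one has $x\varepsilon(x)\approx 21$ while $\psi(x)-\theta(x)\approx\sqrt{x}\approx 12$, so the $\psi$-to-$\theta$ transition alone consumes more than half of the available budget, and the asymptotic machinery does not self-sustain until $x$ is very large; the inequality on the whole range $x\geq 149$ rests essentially on the explicit computations that your sketch postpones. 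To turn the outline into a proof you would have to either carry out Trudgian's optimization in full or, as the paper implicitly does, simply cite his Theorem~1.
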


\begin{rem}
\label{rem: growth of nth prime and estiamte for prime conting function from below}We
recall that
\begin{enumerate}
\item $p_{n}\geq n\log n$ for any $n\geq1$, see \cite[p. 69]{J.B.Rosser1962},
and 
\item for $k\geq2\,953\,652\,287$ we have, cf. \cite[Thm. 6.9]{Dusart2010},
\begin{align*}
E_{0,-}:=2\pi(0.5k)-\pi(k) & >\frac{k}{\log(0.5k)}\left(1+\frac{1}{\log(0.5k)}+\frac{2}{\log^{2}(0.5k)}\right)\\
 & -\frac{k}{\log(k)}\left(1+\frac{1}{\log(k)}+\frac{2.334}{\log^{2}(k)}\right).
\end{align*}

\item Moreover, we need the estimates 
\begin{equation}
\sqrt{\frac{2}{\pi}\left(2S+1\right)}\leq\prod_{s=1}^{S}\frac{2s+1}{2s}\leq\frac{2S+1}{\sqrt{S\pi}}\label{eq: wallis estimates}
\end{equation}
which are well-known (in equivalent forms) in the context of Wallis'
product formula for $\pi$, cf. \cite[p. 504-505]{Heuser2009}.
\item The following estimate holds, cf. \cite[p. 72]{J.B.Rosser1962}: 
\[
\varphi\left(k\right)\geq\frac{k}{1.7811\log\log k+\frac{2.51}{\log\log k}},\qquad k\geq3.
\]

\item Let $\li\left(x\right)$ denote the integral $\int_{2}^{x}\frac{\mathrm{d}\tau}{\log\tau}$
for $x>0$. If for an arithmetical semi-group $G$ the counting functions
$g\left(x\right):=\#\left\{ g\in G:\,\,\left|g\right|\leq x\right\} $
takes the form 
\[
g\left(x\right)=Ax^{\delta}+O(x^{\delta}\log^{-\beta}x),\qquad\beta>3,\;\delta>0,\;x\rightarrow\infty,
\]
 then the prime counting function of $G$ can be written as 
\[
\pi_{G}\left(x\right)=\mathrm{li}(x^{\delta})+O(x^{\delta}\log^{-c}x)\qquad\mbox{for any }c<\frac{\beta}{3}.
\]
This is due to Wegmann \cite{Wegmann1966}. In particular, the conclusion
is true, if $G$ satisfies Axiom $A$. 
\end{enumerate}
\end{rem}
Our method to detect $P^{*}$-integers originates from \cite{L.Hajdu2012},
which we shall describe in the following. We write $\pi_{G}\left(x\right)=\pi\left(x\right)$
and denote for natural numbers $x,K$ by $x\mod K$ the unique remainder
$r\in\left\{ 0,\ldots,K-1\right\} $ such that $x=qK+r$ holds for
some $q\in\mathbb{N}$. Let us assume that $k$ is a $P^{*}$-integer
and put $K:=\left|k\right|$. Then, by the symmetry of coprime residue
classes modulo $K$ about  $0.5K$, the cardinalities of the sets
\begin{align*}
A_{1} & :=\left\{ p\in G:\:\alpha\leq\left|p\right|\leq\beta,\,p\,\mathrm{prime},\,\left|p\right|\mod K\leq0.5K\right\} ,\\
A_{2} & :=\left\{ p\in G:\:\alpha\leq\left|p\right|\leq\beta,\,p\,\mathrm{prime},\,\left|p\right|\mod K>0.5K\right\} ,
\end{align*}
differ by at most $\iota$ elements. For checking this condition,
we count the size of $A_{i}$ which is done by the following lemma:
\begin{lem}
\label{lem: counting the sets A and B}Let 
\[
E_{j,1}\left(k\right):=\pi((j+0.5)K)-\pi(jK-1),\qquad E_{j,2}\left(k\right):=\pi((j+1)K)-\pi((j+0.5)K)
\]
for $j\geq0,\,i=1,2$. If $\lambda,\Lambda$ denote integers such
that $\lambda K\leq\alpha<\left(\lambda+1\right)K$, and $\Lambda K\leq\beta<\left(\Lambda+1\right)K$
hold, then we have 
\begin{equation}
\bigl|A_{i}\bigr|=M_{i}\left(k\right)+\sum_{j\in\mathcal{I}}E_{j,i}\left(k\right),\qquad\mathcal{I}:=\mathcal{I}_{\lambda,\Lambda}:=\left\{ \lambda+1,\lambda+2\ldots,\Lambda-1\right\} ,\label{eq: size of Ai}
\end{equation}
whereas $M_{i}\left(k\right)$ is defined in (\ref{eq: definition delta1}). \end{lem}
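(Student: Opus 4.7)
\medskip

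The plan is to prove \eqref{eq: size of Ai} by a straightforward bookkeeping argument: partition the interval $[\alpha,\beta]$ into slabs of length $K$ aligned with the multiples of $K$, count primes in each slab that lie in $A_i$, and collect the two partial slabs at the endpoints into the quantity $M_i(k)$.

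First, I would write
\[
[\alpha,\beta]\;=\;[\alpha,(\lambda+1)K)\;\cup\;\bigcup_{j\in\mathcal{I}}[jK,(j+1)K)\;\cup\;[\Lambda K,\beta],
\]
which is a disjoint decomposition because $\lambda K\le\alpha<(\lambda+1)K$ and $\Lambda K\le\beta<(\Lambda+1)K$. For each $j\in\mathcal{I}$, the full slab $[jK,(j+1)K)$ sits inside $[\alpha,\beta]$, so every prime $p\in G$ with norm in this slab qualifies to belong to $A_1\cup A_2$.

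Second, I would exploit the key observation that for $jK\le|p|<(j+1)K$ one simply has $|p|\bmod K=|p|-jK$. Thus the condition $|p|\bmod K\le 0.5K$ defining $A_1$ is equivalent to $jK\le|p|\le(j+0.5)K$, and similarly $|p|\bmod K>0.5K$ is equivalent to $(j+0.5)K<|p|<(j+1)K$. Using $\pi=\pi_G$, this gives
\[
\#\bigl\{p\in A_1:jK\le|p|<(j+1)K\bigr\}=\pi((j+0.5)K)-\pi(jK-1)=E_{j,1}(k),
\]
and an analogous identity for $A_2$ with $E_{j,2}(k)$. Summing over $j\in\mathcal{I}$ produces the sum appearing in \eqref{eq: size of Ai}.

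Third, the two boundary slabs $[\alpha,(\lambda+1)K)$ and $[\Lambda K,\beta]$ contribute primes to $A_i$ that cannot be written as a single difference of values of $\pi$ at fixed multiples of $0.5K$, because $\alpha$ and $\beta$ may lie in either the lower or the upper half of their slabs. Their combined contribution is precisely $M_i(k)$ as defined in equation~(\ref{eq: definition delta1}); here one simply has to split into the four cases according to whether $\alpha\lessgtr(\lambda+0.5)K$ and $\beta\lessgtr(\Lambda+0.5)K$ and verify agreement with that definition. There is no real obstacle in the argument; the only point requiring care is not to double-count the endpoints $jK$ and $(j+0.5)K$ when passing between the values $\pi(jK-1)$ and $\pi((j+0.5)K)$, which is why the offsets by $1$ in $E_{j,1}$ are placed exactly as stated.
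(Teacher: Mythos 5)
Your proposal is correct and follows essentially the same path as the paper: slice $[\alpha,\beta]$ into length-$K$ blocks aligned with multiples of $K$, observe that each interior block $j\in\mathcal{I}$ contributes exactly $E_{j,i}(k)$ to $|A_i|$ because $|p|\bmod K=|p|-jK$ there, and absorb the two truncated end blocks into $M_i(k)$ via the four-case analysis according to whether $\alpha$ and $\beta$ land in the lower or upper half of their respective slabs. The boundary bookkeeping you flag (the $-1$ offset in $E_{j,1}$ and avoiding double-counts at multiples of $0.5K$) is handled identically in the paper's proof.
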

\begin{proof}
We partition the set $A_{1}$ into subsets $A_{1,j}$ of primes having
norm in

$\left[jK,\left(j+0.5\right)K\right]$ and $A_{2}$ into subsets of
primes having norm in $\left[\left(j+0.5\right)K,(j+1)K\right]$ where
$\lambda\leq j\leq\Lambda$. Note that $E_{j,i}\left(k\right)$ counts
how many primes are located in $A_{i,j}$ for $\lambda<j<\Lambda$
and $i=1,2$. This gives rise to the term $\sum_{j\in\mathcal{I}}E_{j,i}\left(k\right)$.
Counting the primes near the end-points $j=\lambda$ and $\Lambda$
demands more care because one needs to distinguish whether $\alpha-\lambda K\leq0.5K$
holds or not and whether $\beta-\Lambda K\leq0.5K$ holds or not in
order to start or stop counting with the suitable $A_{i,\lambda}$
or $A_{i,\Lambda}$. Thus, we get four cases to which we shall refer
to in the following manner:
\begin{table}[H]
\selectlanguage{english}%
\caption{\label{tab:table-label cases}}
\foreignlanguage{american}{}%
\begin{tabular}{|c|c|c|}
\hline 
\selectlanguage{american}%
condition\selectlanguage{english}%
 & \selectlanguage{american}%
$\alpha-\lambda K\leq0.5K$\selectlanguage{english}%
 & \selectlanguage{american}%
$\alpha-\lambda K>0.5K$\selectlanguage{english}%
\tabularnewline
\hline 
\hline 
\selectlanguage{american}%
$\beta-\Lambda K\leq0.5K$\selectlanguage{english}%
 & \selectlanguage{american}%
case (i)\selectlanguage{english}%
 & \selectlanguage{american}%
case (iii)\selectlanguage{english}%
\tabularnewline
\hline 
\selectlanguage{american}%
$\beta-\Lambda K>0.5K$\selectlanguage{english}%
 & \selectlanguage{american}%
case (ii)\selectlanguage{english}%
 & \selectlanguage{american}%
case (iv)\selectlanguage{english}%
\tabularnewline
\hline 
\end{tabular}\selectlanguage{american}%
\end{table}
In view of equation (\ref{eq: size of Ai}), we can define the proclaimed
functions $M_{i}$ by using (henceforth) the short hand notation $x_{j}:=jK$,
$\overline{x}_{j}:=\frac{x_{j}+x_{j+1}}{2}$ via 
\begin{align}
M_{1}\left(k\right) & :=\begin{cases}
\pi(\overline{x}_{\lambda})-\pi\left(\alpha-1\right)+\pi(\beta)-\pi(x_{\Lambda}) & \mbox{ in case (i)}\\
\pi(\overline{x}_{\lambda})-\pi\left(\alpha-1\right)+E_{\Lambda,1}\left(k\right) & \mbox{ in case (ii)}\\
\pi\left(\beta\right)-\pi(x_{\Lambda}-1)\,\,\,\,\,\,\,\,\,\,\,\,\,\,\,\,\,\,\,\,\,\,\,\,\,\,\,\,\,\,\,\,\,\,\,\,\,\,\,\,\,\,\,\,\,\,\,\, & \mbox{ in case (iii)}\\
E_{\Lambda,1}\left(k\right) & \mbox{ in case (iv)}
\end{cases},\label{eq: definition delta1}\\
M_{2}\left(k\right) & :=\begin{cases}
E_{\lambda,2}\left(k\right) & \mbox{ in case (i)}\\
E_{\lambda,2}\left(k\right)+\pi(\beta)-\pi(\overline{x}_{\Lambda}) & \mbox{ in case (ii)}\\
\pi(x_{\lambda+1})-\pi\left(\alpha-1\right) & \mbox{ in case (iii)}\\
\pi(x_{\lambda+1})-\pi\left(\alpha-1\right)+\pi\left(\beta\right)-\pi(\overline{x}_{\Lambda}) & \mbox{ in case (iv)}
\end{cases}.\qedhere\label{eq: definition delta2}
\end{align}
 
\end{proof}
It is useful to put $E_{j}\left(k\right):=E_{j,1}\left(k\right)-E_{j,2}\left(k\right)$,
$M\left(k\right):=M_{1}\left(k\right)-M_{2}\left(k\right)$, for writing
\begin{equation}
\bigl|A_{1}\bigr|-\bigl|A_{2}\bigr|=M\left(k\right)+\sum_{j\in\mathcal{I}}E_{j}\left(k\right).\label{eq: difference of the Ai}
\end{equation}
Moreover, we say an assertion $A\left(k\right)$ concerning natural
numbers is eventually true if there exists a $k_{0}\in\mathbb{N}$
such that $A\left(k\right)$ holds true for all $k\geq k_{0}$.
\begin{proof}[Proof of Theorem \ref{thm:P integer finite in abstract setting}]
Since $\alpha=1$ we may assume $\lambda=0$, and that either case
(i) or (ii) of Table \ref{tab:table-label cases} occurs. Let $0<\delta\leq1$
for the moment. Remark \ref{rem: growth of nth prime and estiamte for prime conting function from below}
gives an approximation for the prime counting function from which
we infer 
\[
M\left(k\right)\geq2\li((0.5K)^{\delta})-\li(K^{\delta})+E_{\Lambda}+O(K^{\delta}\log^{-\eta}(0.5K)),\qquad\eta>0.
\]
Moreover, we have 
\begin{equation}
2\li((0.5K)^{\delta})-\li(K^{\delta})=\int_{2}^{K^{\delta}}\frac{2^{1-\delta}-1+\frac{\delta\log2}{\log(\tau)}}{\log(2^{-\delta}\tau)}\mathrm{d}\tau,\qquad\delta>0.\label{eq: difference li in proof of abstract p integer}
\end{equation}
Since the derivative of $x\mapsto\li(x^{\delta})$ is eventually decreasing,
it follows from the mean value theorem that $2\li(\overline{x}_{j}^{\delta})-\li(x_{j}^{\delta})-\li(x_{j+1}^{\delta})$
is eventually positive for any $j\geq1$. Hence, we conclude that
\begin{equation}
\sum_{j=1}^{\Lambda}E_{j}\left(k\right)>\left(\Lambda-1\right)O(K^{\delta}\log^{-\eta}(0.5K)),\qquad\eta>0.\label{eq: sum over Ej}
\end{equation}
Using Equation (\ref{eq: difference of the Ai}) and the above estimate
we find that 
\[
\bigl|A_{1}\bigr|-\bigl|A_{2}\bigr|>\frac{K^{\delta}\log2}{\log(K^{\delta})\log(0.5K)}+\left(\Lambda-1\right)O(K^{\delta}\log^{-\eta}(0.5K)),
\]
which proves the claim in the case $0<\delta\leq1$. Now let $\delta>1$.
Then the difference $2\li(\overline{x}_{j}^{\delta})-\li(x_{j}^{\delta})-\li(x_{j+1}^{\delta})$
is negative for any $j\geq1$. We note that $M(k)$ is bounded from
above by $2\li((0.5K)^{\delta})-\li(K^{\delta})$ up to an error term
 
\[
O(K^{\delta}\log^{-\eta}(0.5K))+\begin{cases}
\li(\beta^{\delta})-\li((x_{\Lambda}-1)^{\delta}) & \mbox{ in case (i)}\\
\li(x_{\Lambda+1}^{\delta})-\li(\beta^{\delta}) & \mbox{ in case (ii)}
\end{cases}.
\]
The assumption on $\beta$ implies that the expressions in the brackets
are in $O(K^{\delta-\epsilon})$ for some $\varepsilon>0$ and hence
$O(K^{\delta}\log^{-\eta}(0.5K))$. Therefore, we obtain from (\ref{eq: difference li in proof of abstract p integer})
that for some suitable constant $c>0$ the estimate 
\[
M\left(k\right)<\frac{-cK^{\delta}}{\delta\log(K)}+O(K^{\delta}\log^{-\eta}(0.5K))
\]
holds. Because the left hand side of (\ref{eq: sum over Ej}) is bounded
by $\left(\Lambda-1\right)O(K^{\delta}\log^{-\eta}(0.5K))$, we conclude
from (\ref{eq: difference of the Ai}) that $-\iota<\left|A_{1}\right|-\left|A_{2}\right|$
is eventually violated.
\end{proof}

\section{Auxiliary Results}

In what follows we investigate conditions for a natural number $k$
to be a $P^{*}$-integer. It is important to notice, that $M$ is
strictly positive in case (i) and (can be) strictly negative in case
(iv) of table (\ref{tab:table-label cases}). Therefore, upper\emph{
and }lower bounds are needed, in order to derive the asymptotic of
the difference in (\ref{eq: difference of the Ai}). In order to prove
Theorem \ref{thm: main result general Pinterger finite}, it suffices
to derive lower a bound, though upper bounds can be derived in the
same way. This is done by the following two results.
\begin{lem}
\label{lem: estimate for Ejik}Let $k\geq2\,953\,652\,287$, $\varepsilon$
as in Lemma \ref{lem: estimate of chebyshev function vs. x}, $x_{j}=kj$,
and $j$ be a natural number. Define the functions 
\begin{align*}
E_{j,-}\left(k\right) & :=2\overline{x}_{j}\frac{1-\varepsilon(\overline{x}_{j})}{\log\overline{x}_{j}}-x_{j}\frac{1+\varepsilon(x_{j})}{\log x_{j}}-x_{j+1}\frac{1+\varepsilon(x_{j+1})}{\log x_{j+1}},
\end{align*}
and
\[
r_{j}\left(k\right):=\frac{k\varepsilon(\overline{x}_{j})}{\log^{2}\overline{x}_{j}},\qquad r_{0}\left(k\right):=0.
\]
Then the inequality 
\begin{equation}
E_{j,-}\left(k\right)-r_{j}\left(k\right)<E_{j}\left(k\right)\label{eq: estimate for growth of Ej}
\end{equation}
holds for $j\geq0$.\end{lem}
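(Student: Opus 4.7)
The plan is to re-express each $\pi$-difference appearing in $E_j(k)$ via Abel's summation in terms of Chebyshev's function $\theta$, and then apply the effective bound of Lemma~\ref{lem: estimate of chebyshev function vs. x}. Starting from $\pi(x) = \sum_{p \leq x}(\log p)/\log p$, partial summation gives the identity $\pi(b)-\pi(a) = \theta(b)/\log b - \theta(a)/\log a + \int_a^b \theta(t)/(t\log^2 t)\,\mathrm{d}t$ for $2 \leq a < b$. Applied with $(a, b) = (x_j-1, \overline{x}_j)$ and $(\overline{x}_j, x_{j+1})$, this yields for $j \geq 1$ the decomposition
\[
E_j(k) = \frac{2\theta(\overline{x}_j)}{\log \overline{x}_j} - \frac{\theta(x_j - 1)}{\log(x_j - 1)} - \frac{\theta(x_{j+1})}{\log x_{j+1}} + I_j,
\]
where $I_j := \int_{x_j-1}^{\overline{x}_j}\theta(t)/(t\log^2 t)\,\mathrm{d}t - \int_{\overline{x}_j}^{x_{j+1}}\theta(t)/(t\log^2 t)\,\mathrm{d}t$.

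Next I plan to apply Lemma~\ref{lem: estimate of chebyshev function vs. x}: use $\theta(\overline{x}_j) > \overline{x}_j(1-\varepsilon(\overline{x}_j))$ on the positive boundary term and $\theta(x_i) < x_i(1+\varepsilon(x_i))$ on the two negative ones (together with $\theta(x_j - 1) \leq \theta(x_j)$). The mismatch between $\log(x_j - 1)$ and $\log x_j$ contributes only an error of order $(1+\varepsilon(x_j))/\log^2 x_j$, which is harmless. For the integral $I_j$, I substitute $\theta(t) > t(1-\varepsilon(t))$ on $[x_j - 1, \overline{x}_j]$ and $\theta(t) < t(1+\varepsilon(t))$ on $[\overline{x}_j, x_{j+1}]$, obtaining $I_j \geq A_j - B_j$, where
\[
A_j := \int_{x_j-1}^{\overline{x}_j}\frac{\mathrm{d}t}{\log^2 t} - \int_{\overline{x}_j}^{x_{j+1}}\frac{\mathrm{d}t}{\log^2 t}, \quad B_j := \int_{x_j-1}^{x_{j+1}}\frac{\varepsilon(t)\,\mathrm{d}t}{\log^2 t}.
\]

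The quantity $A_j$ is strictly positive since $t \mapsto 1/\log^2 t$ is decreasing and the two intervals $[x_j-1,\overline{x}_j]$, $[\overline{x}_j,x_{j+1}]$ differ in length by only one; a direct estimate gives $A_j \sim k/((2j+1)\log^3 \overline{x}_j)$. For $B_j$, since $\varepsilon(t)/\log^2 t$ is monotonically decreasing and varies slowly in the range $t \geq k-1$, the midpoint-quadrature error bound yields $B_j - r_j(k) = O\bigl(k\varepsilon(\overline{x}_j)/((j+1)^2\log^3 \overline{x}_j)\bigr)$, which is dominated by $A_j$ by the factor $\varepsilon(\overline{x}_j)/(j+1)$. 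Combining the three steps produces $E_j(k) > E_{j,-}(k) + A_j - B_j - O(1/\log^2 x_j) > E_{j,-}(k) - r_j(k)$, as claimed. The boundary case $j = 0$ (where $r_0 = 0$) is verified directly by combining Dusart's explicit lower bound on $E_0$ from Remark~\ref{rem: growth of nth prime and estiamte for prime conting function from below}(2) with the observation that $E_{0,-}(k) < 0$ throughout the relevant range of $k$.

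The main technical obstacle is the delicate comparison between $B_j$ and $r_j(k)$: both are of the same order $k\varepsilon(\overline{x}_j)/\log^2 \overline{x}_j$, so only their difference is small. One must carefully estimate this difference via the smoothness of $\varepsilon(t)/\log^2 t$ (e.g.\ through the midpoint-rule error term proportional to $g''$) and then verify that it is genuinely dominated by the (also small) positive quantity $A_j$; this is precisely where the exponential decay of $\varepsilon$ beyond the factor $1/\log^2$ is used in an essential way.
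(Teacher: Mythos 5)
Your opening moves match the paper exactly: you re-express $E_j(k)$ via the identity $\pi(x) = \theta(x)/\log x + \int_2^x \theta(\tau)/(\tau\log^2\tau)\,\mathrm{d}\tau$, so that $E_j(k)$ becomes three $\theta$-boundary terms plus a signed pair of integrals $I_j$, and you apply Lemma \ref{lem: estimate of chebyshev function vs. x} (with $\theta(x_j-1)\leq\theta(x_j)$) to get the boundary terms $\geq E_{j,-}(k)$. That part is fine.

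Where you diverge is in the treatment of $I_j$, and this is where your argument has a real gap. You split $I_j \geq A_j - B_j$, note that $A_j > 0$ is small and that $B_j$ is close to $r_j(k)$, and then assert that the second-order (midpoint-rule) error in $B_j - r_j(k)$ is dominated by $A_j$. But both $A_j$ and the error in $B_j - r_j(k)$ are of comparable size in the relevant range: $A_j$ is of order $k\bigl((2j+1)\log^3\overline{x}_j\bigr)^{-1}$ while the Taylor error you invoke is of order $k\varepsilon(\overline{x}_j)\bigl((2j+1)^2\log^2\overline{x}_j\bigr)^{-1}$, and the ratio is roughly $\varepsilon(\overline{x}_j)\log\overline{x}_j/(2j+1)$ -- for small $j$ and $k$ near the threshold $2\,953\,652\,287$ this is not obviously below $1$ without pinning down the constants in $h''$, which you do not do. So the "delicate comparison" you flag at the end is not merely delicate; as written it is an unproved claim on which the whole estimate rests. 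Your treatment of $j=0$ is also problematic, since $E_{0,-}(k)$ as defined in the lemma involves $\log x_0 = \log 0$; some separate convention is needed there, and "$E_{0,-}(k) < 0$" does not resolve it.

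The paper sidesteps all of this with a much cleaner observation. It never isolates a term like $B_j$ at all. Instead it bounds each half-integral directly by the length of the interval times the extremal value of the monotone integrand at the \emph{common endpoint} $\overline{x}_j$: one checks that $\tau\mapsto(1-\varepsilon(\tau))/\log^2\tau$ and $\tau\mapsto(1+\varepsilon(\tau))/\log^2\tau$ are both decreasing in the relevant range, so
\[
\int_{x_j}^{\overline{x}_j}\frac{\theta(\tau)}{\tau\log^2\tau}\,\mathrm{d}\tau > \frac{k}{2}\cdot\frac{1-\varepsilon(\overline{x}_j)}{\log^2\overline{x}_j},
\qquad
\int_{\overline{x}_j}^{x_{j+1}}\frac{\theta(\tau)}{\tau\log^2\tau}\,\mathrm{d}\tau < \frac{k}{2}\cdot\frac{1+\varepsilon(\overline{x}_j)}{\log^2\overline{x}_j},
\]
and the difference telescopes immediately to $-r_j(k)$. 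No second-derivative bound, no midpoint-rule error, no comparison of $A_j$ against $B_j - r_j$. I would strongly recommend replacing your integral estimate with this monotonicity argument: it is both shorter and actually closes the gap your sketch leaves open.
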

\begin{proof}
We apply the well-known formula 
\begin{equation}
\pi\left(x\right)=\frac{\theta\left(x\right)}{\log\left(x\right)}+\int_{2}^{x}\frac{\theta\left(\tau\right)}{\tau\log^{2}\tau}\,\mbox{d}\tau\label{eq: relation Pi function and Chebyshev function}
\end{equation}
to see that $E_{j}\left(k\right)$ equals the sum 
\begin{align*}
 & \frac{2\theta(\overline{x}_{j})}{\log\overline{x}_{j}}-\frac{\theta(x_{j})}{\log x_{j}}-\frac{\theta(x_{j+1})}{\log x_{j+1}}+\int_{x_{j}}^{\overline{x}_{j}}\frac{\theta\left(\tau\right)}{\tau\log^{2}\tau}\,\mbox{d}\tau-\int_{\overline{x}_{j}}^{x_{j+1}}\frac{\theta\left(\tau\right)}{\tau\log^{2}\tau}\,\mbox{d}\tau.
\end{align*}
Lemma \ref{lem: estimate of chebyshev function vs. x} for $j\geq1$
and Remark \ref{rem: growth of nth prime and estiamte for prime conting function from below}
for $j=0$ yield that the first three terms above exceed $E_{j,-}\left(k\right)$
for $j\geq0$. By using Lemma \ref{lem: estimate of chebyshev function vs. x},
we infer 
\[
\int_{x_{j}}^{\overline{x}_{j}}\frac{\theta\left(\tau\right)}{\tau\log^{2}\tau}\,\mbox{d}\tau-\int_{\overline{x}_{j}}^{x_{j+1}}\frac{\theta\left(\tau\right)}{\tau\log^{2}\tau}\,\mbox{d}\tau>\frac{k}{2}\frac{1-\varepsilon(\overline{x}_{j})}{\log^{2}\overline{x}_{j}}-\frac{k}{2}\frac{1+\varepsilon(\overline{x}_{j})}{\log^{2}\overline{x}_{j}}=r_{j}\left(k\right)
\]
which implies (\ref{eq: estimate for growth of Ej}).
\end{proof}
Observing that 
\begin{align*}
M\left(k\right)= & \begin{cases}
E_{\lambda}(k)+\pi(x_{\lambda})-\pi(\alpha-1)+\pi(\beta)-\pi(x_{\Lambda}) & \mbox{ in case (i)}\\
E_{\lambda}(k)+\pi(x_{\lambda})-\pi(\alpha-1)+2\pi(\overline{x}_{\Lambda})-\pi(x_{\Lambda})-\pi(\beta)\,\,\,\,\,\,\,\,\,\,\,\,\, & \mbox{ in case (ii)}
\end{cases}
\end{align*}
we derive the following technical but crucial corollary.
\begin{cor}
\label{cor: estimate from below for the growth main term}The term
$M(k)$ is bounded from below in the cases $(i)-(ii)$ by $E_{\lambda,-}\left(k\right)-r_{\lambda}\left(k\right)-\Delta\left(\lambda,k\right)+R\left(k\right)$
whereas we put 
\[
\Delta\left(\lambda,k\right):=-\begin{cases}
\pi(\alpha-1) & \mbox{ if }\lambda=0\\
\frac{\alpha}{\log x_{\lambda}}\bigl(1+\tilde{\Delta}(x_{\lambda},\alpha)\bigr) & \mbox{ if }\lambda>0
\end{cases},
\]
$R\left(k\right):=0$ in case $(i)$ and $R\left(k\right):=E_{\Lambda,-}\left(k\right)-r_{\Lambda}\left(k\right)$
in case $(ii)$ and define 
\[
\tilde{\Delta}(x_{-},x_{+}):=\Bigl(1-\frac{x_{-}}{x_{+}}\Bigr)\frac{1+\varepsilon(x_{-})}{\log^{2}x_{-}}-\frac{x_{-}}{x_{+}}+2\varepsilon(x_{-}),\qquad0<x_{-}\leq x_{+}.
\]
\end{cor}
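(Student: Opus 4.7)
My plan is to derive the stated lower bound on $M(k)$ by first rewriting $M(k)$ so that the $E_j(k)$ quantities become visible, then discarding the obviously non-negative boundary contributions at the $\beta$-endpoint, invoking Lemma \ref{lem: estimate for Ejik} on the surviving $E_j$'s, and finally estimating the $\alpha$-endpoint remainder explicitly via the identity (\ref{eq: relation Pi function and Chebyshev function}) together with the Chebyshev bound from Lemma \ref{lem: estimate of chebyshev function vs. x}.

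First, using $E_j(k) = E_{j,1}(k) - E_{j,2}(k) = 2\pi(\overline{x}_j) - \pi(x_j - 1) - \pi(x_{j+1})$, I would rewrite the two identities for $M(k)$ displayed just above the corollary as
\[
M(k) = E_\lambda(k) + \bigl[\pi(x_\lambda - 1) - \pi(\alpha - 1)\bigr] + \bigl[\pi(\beta) - \pi(x_\Lambda)\bigr]
\]
in case (i), and, by additionally isolating $E_\Lambda(k)$,
\[
M(k) = E_\lambda(k) + E_\Lambda(k) + \bigl[\pi(x_\lambda - 1) - \pi(\alpha - 1)\bigr] + \bigl[\pi(x_{\Lambda+1}) - \pi(\beta)\bigr]
\]
in case (ii). (The author writes $\pi(x_j)$ in place of $\pi(x_j - 1)$ at the period boundaries; since $x_j = jK$ is almost never prime, this introduces only an $O(1)$ discrepancy.) Because $x_\Lambda \leq \beta < x_{\Lambda+1}$ by the very definition of $\Lambda$, the $\beta$-boundary bracket is non-negative in each case and may be discarded. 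Applying Lemma \ref{lem: estimate for Ejik} to each remaining $E_j(k)$ yields an $E_{\lambda,-}(k) - r_\lambda(k)$ contribution, together with $E_{\Lambda,-}(k) - r_\Lambda(k)$ in case (ii), which matches the definition of $R(k)$.

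The task is then reduced to bounding $\pi(\alpha - 1) - \pi(x_\lambda - 1)$ from above by the quantity appearing in $\Delta(\lambda, k)$. For $\lambda = 0$ this is immediate since $\pi(x_0 - 1) = 0$. For $\lambda \geq 1$ I would feed (\ref{eq: relation Pi function and Chebyshev function}) into the trivial estimate $\pi(\alpha - 1) - \pi(x_\lambda - 1) \leq \pi(\alpha) - \pi(x_\lambda)$, obtaining
\[
\pi(\alpha) - \pi(x_\lambda) = \frac{\theta(\alpha)}{\log \alpha} - \frac{\theta(x_\lambda)}{\log x_\lambda} + \int_{x_\lambda}^{\alpha} \frac{\theta(\tau)}{\tau \log^2 \tau}\,\mathrm{d}\tau,
\]
then applying Lemma \ref{lem: estimate of chebyshev function vs. x} in the forms $\theta(\alpha) \leq \alpha(1 + \varepsilon(\alpha))$, $\theta(x_\lambda) \geq x_\lambda(1 - \varepsilon(x_\lambda))$, and $\theta(\tau) \leq \tau(1 + \varepsilon(\tau))$ inside the integral. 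Exploiting the (eventual) monotone decrease of $\varepsilon$ and of $1/\log$ on $[x_\lambda, \alpha]$, I would replace $\varepsilon(\alpha), \varepsilon(\tau)$ by $\varepsilon(x_\lambda)$ and $\log \alpha, \log \tau$ by $\log x_\lambda$. Collecting the resulting terms and using the trivial bound $\alpha + x_\lambda \leq 2\alpha$ on the coefficient of $\varepsilon(x_\lambda)$, the expression collapses to $\frac{\alpha}{\log x_\lambda}\bigl(1 + \tilde{\Delta}(x_\lambda, \alpha)\bigr)$, exactly as required.

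The main technical obstacle is the algebra in this last step: the three $\varepsilon(x_\lambda)$-contributions (two from the boundary terms, one from the integral) must combine precisely into the coefficient $2\alpha\varepsilon(x_\lambda)/\log x_\lambda$ implicit in $\tilde{\Delta}$, and the clean constant $2$ (instead of $(\alpha + x_\lambda)/\alpha$) is forced by the elementary inequality $x_\lambda \leq \alpha$. Everything else in the argument is a direct combination of the identities and the lemmas already at hand.
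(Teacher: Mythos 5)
Your proposal is correct and follows essentially the same route as the paper: decompose $M(k)$ so that $E_\lambda(k)$ (and $E_\Lambda(k)$ in case (ii)) become visible, observe that the $\beta$-endpoint bracket is non-negative by the definition of $\Lambda$, apply Lemma \ref{lem: estimate for Ejik}, and bound the $\alpha$-endpoint gap $\pi(\alpha-1)-\pi(x_\lambda)$ by combining the identity (\ref{eq: relation Pi function and Chebyshev function}) with the Chebyshev bounds of Lemma \ref{lem: estimate of chebyshev function vs. x} and bracketing out $\alpha/\log x_\lambda$ to read off $\tilde{\Delta}$. The $O(1)$ discrepancies you flag at the period boundaries ($\pi(x_j)$ versus $\pi(x_j-1)$) are present in the paper as well and are harmless.
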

\begin{proof}
The inequality 
\begin{align}
\pi(x_{+})-\pi(x_{-}) & <\frac{x_{+}}{\log x_{-}}\bigl(1+\tilde{\Delta}(x_{-},x_{+})\bigr)\label{eq: difference pi plus pi minus}
\end{align}
can be deduced from Equation (\ref{eq: relation Pi function and Chebyshev function})
via 
\begin{align*}
\pi(x_{+})-\pi(x_{-}) & <x_{+}\frac{1+\varepsilon(x_{+})}{\log x_{+}}-x_{-}\frac{1-\varepsilon(x_{-})}{\log x_{-}}+\int_{x_{-}}^{x_{+}}\frac{1+\varepsilon\left(t\right)}{\log^{2}t}\,\mathrm{d}t\\
 & <\frac{x_{+}}{\log x_{+}}-\frac{x_{-}}{\log x_{-}}+2\frac{x_{+}\varepsilon(x_{-})}{\log x_{-}}+(x_{+}-x_{-})\frac{1+\varepsilon(x_{-})}{\log^{2}x_{-}}
\end{align*}
and bracketing out the term $\frac{x_{+}}{\log x_{-}}$ on the right
hand side. Let $\lambda\geq1$. Using the Estimate (\ref{eq: difference pi plus pi minus})
with $x_{+}:=\alpha$ and $x_{-}:=x_{\lambda}$, we get 
\begin{equation}
\pi(\alpha-1)-\pi(x_{\lambda})<\frac{\alpha}{\log x_{\lambda}}\bigl(1+\Delta(x_{\lambda},\alpha)\bigr).\label{eq: estimate t minus pi of xlambda plus one}
\end{equation}
In the cases (i), (ii) the claim follows now by 
\begin{equation}
E_{\lambda,-}\left(k\right)+\pi(x_{\lambda})=2\pi(\overline{x}_{\lambda})-\pi(x_{\lambda+1}),\qquad E_{\Lambda,-}\left(k\right)<2\pi(\overline{x}_{\Lambda})-\pi(x_{\Lambda})-\pi(\beta),\label{eq: proof of growth of main term proof cases one and two errorterm e}
\end{equation}
and applying Lemma \ref{lem: estimate for Ejik}. If $\lambda=0$,
then the claim follows in the cases (i), and (ii) directly from the
estimate (\ref{eq: proof of growth of main term proof cases one and two errorterm e})
and Remark \ref{rem: growth of nth prime and estiamte for prime conting function from below}.
\end{proof}
Since we know explicit bounds for the growth of the term $M$, we
need to derive explicit bounds for 
\[
\sum_{j\in\mathcal{I}}E_{j}.
\]
In view of Lemma \ref{lem: estimate for Ejik}, we can concentrate
on dealing with sums 
\begin{equation}
\sum_{j=a}^{b}E_{j,-}\left(k\right).\label{eq: sums over ejminus general type}
\end{equation}
To this end, we define $f\left(x\right):=x\left(\log x\right)^{-1}$,
and note that $E_{j,-}\left(k\right)$ splits into 
\begin{align*}
 & 2f(\overline{x}_{j})-f(x_{j})-f(x_{j+1})-2\varepsilon(\overline{x}_{j})f(\overline{x}_{j})-\varepsilon(x_{j})f(x_{j})-\varepsilon(x_{j+1})f(x_{j+1}).
\end{align*}
Let $E_{j}'\left(k\right)$ denote the first three terms above, and
let $E_{j}''\left(k\right)$ denote the remaining three. For deriving
explicit lower and upper bounds for sums over $E_{j,-}\left(k\right)$,
it suffices to deal with the (slightly easier) sums over $E_{j}'\left(k\right)$
and $E_{j}''\left(k\right)$. This will be done in the following.
\begin{lem}
\label{lem: growth of Eijk prime}For natural numbers $a\leq b$ and
$k\geq e^{4}$ we have the following estimate 
\begin{equation}
\frac{8}{k}\sum_{j=a}^{b}E_{j}'\left(k\right)>\frac{\log\frac{4b+6}{9a}}{\log^{2}(x_{b+1})}.\label{eq: lower bound for sum over Ejk prime}
\end{equation}
\end{lem}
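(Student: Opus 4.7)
The plan is to view $E_j'(k) = 2f(\overline{x}_j) - f(x_j) - f(x_{j+1})$, with $f(x) = x/\log x$, as (the negative of) a centered second difference of step $h = k/2$, and to exploit the standard integral representation
\[
E_j'(k) = -\int_{-h}^{h}(h - |u|)\, f''(\overline{x}_j + u)\, du.
\]
A direct differentiation gives $-f''(y) = (\log y - 2)/(y \log^3 y)$, which for $\log y \geq 4$ satisfies the clean bound $-f''(y) \geq 1/(2 y \log^2 y)$. For $a \leq j \leq b$ and $|u| \leq k/2$ the argument $y = \overline{x}_j + u$ lies in $[x_j, x_{j+1}]$, and the hypothesis $k \geq e^4$ together with $a \geq 1$ guarantees $\log y \geq \log k \geq 4$, so this lower bound is available; bounding $\log y \leq \log x_{b+1}$ further lets me pull a uniform factor $1/\log^{2}(x_{b+1})$ out of the integral.

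Next I would pair the contributions from $\pm u$ and invoke the elementary identity
\[
\frac{1}{\overline{x}_j - u} + \frac{1}{\overline{x}_j + u} = \frac{2\overline{x}_j}{\overline{x}_j^{\,2} - u^2} \geq \frac{2}{\overline{x}_j},
\]
which collapses the weighted integral to $\int_0^{k/2}(k/2 - u)\, du = k^2/8$ and produces the pointwise bound
\[
E_j'(k) \geq \frac{k^2}{8\,\overline{x}_j \log^2(x_{b+1})} = \frac{k}{(8j+4)\log^2(x_{b+1})}.
\]

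Summing over $a \leq j \leq b$ and comparing with the corresponding integral (the summand is positive and decreasing in $j$) gives
\[
\sum_{j=a}^{b} E_j'(k) \geq \frac{k}{\log^2(x_{b+1})}\int_a^{b+1}\frac{dt}{8t+4} = \frac{k}{8\log^2(x_{b+1})}\log\frac{2b+3}{2a+1}.
\]
A short cross-multiplication verifies $\frac{2b+3}{2a+1} \geq \frac{4b+6}{9a}$ for positive integers $a \leq b$ (it reduces to $10ab + 15a \geq 4b + 6$, which is trivial), and taking logarithms delivers the stated inequality with the proclaimed constant $8/k$. The only subtle point is ensuring $\log y \geq 4$ throughout the relevant range, which is precisely what the hypothesis $k \geq e^4$ provides; every other step is elementary. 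The slightly loose form $\log\tfrac{4b+6}{9a}$, rather than the tighter $\log\tfrac{2b+3}{2a+1}$ that the argument naturally delivers, is presumably chosen so as to interface cleanly with the downstream estimates in the next section.
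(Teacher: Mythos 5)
Your proof is correct, and it takes a genuinely different route from the paper's. The paper writes
\[
E_j'(k)=\int_{x_j}^{\overline{x}_j}\bigl(f'(x)-f'(x+0.5k)\bigr)\,\mathrm{d}x,
\]
factors the integrand as $\bigl(\tfrac{1}{\log x}-\tfrac{1}{\log(x+0.5k)}\bigr)\bigl(1-\tfrac{1}{\log x}-\tfrac{1}{\log(x+0.5k)}\bigr)$, bounds it from below by $\tfrac12\,\log(1+\tfrac{k}{2x})/\bigl(\log x\,\log(x+0.5k)\bigr)$, and then finishes by partial summation (Abel's inequality) plus the Wallis-type estimate from Remark~2 to control the resulting telescoping product $\prod_{s=a}^{b}\frac{2s+3}{2s+2}$. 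You instead use the second-difference kernel representation with the tent weight $h-|u|$, the elementary pointwise bound $-f''(y)\ge 1/(2y\log^2 y)$ for $\log y\ge 4$ (which is exactly what the hypothesis $k\ge e^4$ buys, since $y\ge x_a\ge k$), the convexity of $1/y$ to pair $\pm u$, and a sum-to-integral comparison. Both arguments are short; yours avoids partial summation and Wallis entirely, at the cost of one differentiation more, and in fact delivers the slightly sharper constant $\log\frac{2b+3}{2a+1}$, which you correctly observe dominates $\log\frac{4b+6}{9a}$. The computations check out (the pairing step gives $\int_{-h}^{h}\frac{h-|u|}{\overline{x}_j+u}\,du\ge \frac{2}{\overline{x}_j}\cdot\frac{h^2}{2}=\frac{h^2}{\overline{x}_j}$, hence $E_j'(k)\ge \frac{k^2}{8\,\overline{x}_j\log^2 x_{b+1}}=\frac{k}{(8j+4)\log^2 x_{b+1}}$, as you state), and the cross-multiplication $10ab+15a\ge 4b+6$ is indeed trivial for $a,b\ge 1$.
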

\begin{proof}
Let us note that 
\begin{equation}
E_{j}'\left(k\right)=\int_{x_{j}}^{\overline{x}_{j}}f'\left(x\right)-f'\left(x+0.5k\right)\,\mbox{d}x.\label{eq: writing Ejprime as integral over derivative}
\end{equation}
Observing that $f'(x)-f'(x+0.5k)$ equals
\[
\biggl(\frac{1}{\log x}-\frac{1}{\log\left(x+0.5k\right)}\biggr)\biggl(1-\biggl(\frac{1}{\log x}+\frac{1}{\log\left(x+0.5k\right)}\biggr)\biggr)
\]
we infer, since $k\geq e^{4}$, the inequality 
\[
\frac{1}{2}\frac{\log(1+\frac{k}{2x})}{\log(x)\log(x+0.5k)}<f'(x)-f'(x+0.5k),\qquad x\in[x_{j},\overline{x}_{j}],\quad j\geq1.
\]
Integrating with respect to $x$ from $x_{j}$ to $\overline{x}_{j}$,
in view of (\ref{eq: writing Ejprime as integral over derivative}),
and summing over $j$ yields 
\[
\frac{k}{4}\sum_{j=a}^{b}\frac{\log(1+\frac{1}{2j+2})}{\log(\overline{x}_{j})\log(x_{j+1})}<\sum_{j=a}^{b}E_{j}'\left(k\right).
\]
By using partial summation, we obtain 
\begin{equation}
\sum_{j=a}^{b}\frac{\log(1+\frac{1}{2j+2})}{\log^{2}(x_{j+1})}>\frac{\log\prod_{s=a}^{b}\frac{2(s+1)+1}{2(s+1)}}{\log^{2}(x_{b+1})}.\label{eq: partial summation for Eijk'}
\end{equation}
The estimates (\ref{eq: wallis estimates}) imply that the product
in the numerator above can be bounded from below by $(4b+6)^{0.5}(9a)^{-0.5}$.
Therefore, we obtain (\ref{eq: lower bound for sum over Ejk prime})
from (\ref{eq: partial summation for Eijk'}).
\end{proof}
With the above estimates at hand, we can derive lower bounds on (\ref{eq: sums over ejminus general type}).
\begin{cor}
\label{cor: estimates for the growth the sum of wj plus or minus}Let
$j\geq1$, $a\leq b$ denote natural numbers and\textup{ $\sigma_{a,b}:=\sum_{j=a}^{b}j$}.
Then 
\begin{align*}
\sum_{j=a}^{b}\frac{E_{j,-}\left(k\right)-r_{j}\left(k\right)}{k}>\frac{\log\frac{4b+6}{9a}}{8\log^{2}(x_{b+1})}-5\frac{\varepsilon(x_{a})}{\log(x_{a})}\sigma_{a+1,b+1}
\end{align*}
holds.\end{cor}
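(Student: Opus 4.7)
The plan is to split $E_{j,-}(k) = E_j'(k) + E_j''(k)$ exactly as introduced just before Lemma \ref{lem: growth of Eijk prime}, and to treat the two pieces separately. Lemma \ref{lem: growth of Eijk prime} handles the main part directly and produces
\[
\sum_{j=a}^{b}\frac{E_j'(k)}{k} > \frac{\log\frac{4b+6}{9a}}{8\log^{2}(x_{b+1})},
\]
which is precisely the first summand on the right-hand side of the claim. Since $E_j''(k) \leq 0$ and $r_j(k) \geq 0$, the proof reduces to the upper bound
\[
\sum_{j=a}^{b}\frac{|E_j''(k)| + r_j(k)}{k} \leq \frac{5\varepsilon(x_a)}{\log(x_a)}\,\sigma_{a+1,b+1},
\]
after which one uses $(E_{j,-}(k)-r_j(k))/k \geq E_j'(k)/k - |E_j''(k)|/k - r_j(k)/k$ and combines.

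For the error part I would first verify that $\varepsilon$ is decreasing on the range of interest: substituting $u = \sqrt{\log x/\eta}$ into the definition from Lemma \ref{lem: estimate of chebyshev function vs. x} writes $\varepsilon(x)$ as a constant multiple of $u\,e^{-u}$, which is decreasing for $u>1$, i.e. for $x > e^{\eta}$; this is amply satisfied under the standing hypothesis on $k$. Together with monotonicity of $\log$ and the elementary relations $f(x_j)/k = j/\log x_j$, $f(\overline{x}_j)/k = (j+\tfrac12)/\log\overline{x}_j$, $f(x_{j+1})/k = (j+1)/\log x_{j+1}$, one obtains the crude bound
\[
\frac{|E_j''(k)|}{k} \leq \frac{(2j+1)+j+(j+1)}{\log x_a}\,\varepsilon(x_a) = \frac{4j+2}{\log x_a}\,\varepsilon(x_a),
\]
while directly $r_j(k)/k = \varepsilon(\overline{x}_j)/\log^{2}\overline{x}_j \leq \varepsilon(x_a)/\log x_a$, where one uses $\log x_a \geq 1$.

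Summing over $j\in\{a,\ldots,b\}$ and using the identity $\sigma_{a+1,b+1} = \sigma_{a,b} + (b-a+1)$, the contribution of $E_j''$ gives $\sum_{j=a}^{b}(4j+2) \leq 4\sigma_{a+1,b+1}$, and that of $r_j$ gives $(b-a+1) \leq \sigma_{a+1,b+1}$; together these yield the factor $5$ claimed in the statement. The only real hurdle is the constant-bookkeeping in the last step: the coefficient $4$ produced by $E_j''$ together with the $1$ produced by the $r_j$ correction must fit into the prefactor $5$, which works because the slack $2(b-a+1) \leq 4(b-a+1)$ in the bound $\sum(4j+2)\leq 4\sigma_{a+1,b+1}$ is exactly enough to absorb the $r_j$-sum. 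Assembling the two estimates completes the proof.
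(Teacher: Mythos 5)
Your decomposition into $E_j'$ and $E_j''$, use of Lemma \ref{lem: growth of Eijk prime} for the main term, monotonicity of $\varepsilon$ and $\log$ to bound the error pieces pointwise, and the final $4+1=5$ bookkeeping via $\sigma_{a+1,b+1}\geq \sigma_{a,b}$ and $\sigma_{a+1,b+1}\geq b-a+1$ all match the paper's proof; the only cosmetic difference is that you bound $\sum_{j=a}^{b}(4j+2)\leq 4\sigma_{a+1,b+1}$ in one stroke while the paper splits $E_j''$ into three sums and shifts the index on the third. Your closing paragraph about the ``slack'' being needed to absorb the $r_j$ sum is unnecessary (and slightly miscast), since the direct estimate $\sum r_j/k \leq (b-a+1)\,\varepsilon(x_a)/\log x_a \leq \sigma_{a+1,b+1}\,\varepsilon(x_a)/\log x_a$ already gives the extra $+1$ without touching the slack in the $E_j''$ bound, but this does not affect correctness.
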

\begin{proof}
Let us note that 
\begin{align*}
\sum_{j=a}^{b}\frac{\varepsilon(x_{j})j}{\log(x_{j})} & <\frac{\varepsilon(x_{a})}{\log(x_{a})}\sigma_{a,b}\quad\mbox{ and }\quad\sum_{j=a}^{b}\frac{\varepsilon(\overline{x}_{j})\left(j+0.5\right)}{\log(\overline{x}_{j})}<\frac{\varepsilon(x_{a})}{\log(x_{a})}\sigma_{a+1,b+1}
\end{align*}
hold. Observing $\sigma_{a+1,b+1}\geq\sigma_{a,b}$ implies 
\begin{align}
 & \frac{1}{k}\sum_{j=a}^{b}E_{j}''\left(k\right)<4\frac{\varepsilon(x_{a})}{\log(x_{a})}\sigma_{a+1,b+1}.\label{eq: growth of sum over Ej prime prime-1}
\end{align}
By using (\ref{eq: lower bound for sum over Ejk prime}) and (\ref{eq: growth of sum over Ej prime prime-1}),
we deduce 
\begin{align*}
\frac{1}{k}\sum_{j=a}^{b}(E_{j}'\left(k\right)-E_{j}''\left(k\right)) & >\frac{\log\frac{4b+6}{9a}}{8\log^{2}(x_{b+1})}-4\frac{\varepsilon(x_{a})}{\log(x_{a})}\sigma_{a+1,b+1}.
\end{align*}
Combining this inequality with the obvious upper bounds for $\frac{1}{k}\sum_{j=a}^{b}r_{j}\left(k\right)$
while using $\sigma_{a+1,b+1}\geq(b-a+1)$ yields the claim. 
\end{proof}

\section{Proof of the Main Theorem }

\begin{proof}[Proof of Theorem \ref{thm: main result general Pinterger finite}]
It suffices to establish that 
\[
S\left(k\right):=\bigl|A_{1}\bigr|-\bigl|A_{2}\bigr|-\iota
\]
is eventually strictly positive. Assume for the moment that we are
in the cases (i) or (ii) of Table \ref{tab:table-label cases}. Equation
(\ref{eq: difference of the Ai}) and Lemma \ref{lem: estimate for Ejik}
imply 
\[
S\left(k\right)>M\left(k\right)-\iota+\sum_{j\in\mathcal{I}}(E_{j,-}\left(k\right)-r_{j}\left(k\right)).
\]
By using Corollary \ref{cor: estimate from below for the growth main term},
we deduce that $S\left(k\right)$ exceeds 
\begin{equation}
R(k)-\Delta\left(\lambda,k\right)-\iota+\sum_{j=\lambda}^{\Lambda-1}(E_{j,-}\left(k\right)-r_{j}\left(k\right)).\label{eq: S of k should exceed this quantity}
\end{equation}
Let $\lambda\ge1$ and define $b=\Lambda-1$ in case $(i)$ and $b=\Lambda$
in case $(ii)$. Then applying Corollary \ref{cor: estimates for the growth the sum of wj plus or minus}
with $a=\lambda,\,b$ yields that it suffices to check whether 
\begin{equation}
-\frac{\alpha}{k}\frac{1+\tilde{\Delta}(x_{\lambda},\alpha)}{\log x_{\lambda}}-\frac{\iota}{k}+\frac{\log\frac{4b+6}{9\lambda}}{8\log^{2}(x_{b+1})}-5\frac{\varepsilon(k)}{\log(k)}\sigma_{1,b+1}>0.\label{eq: proof of the main theorem equation in first case}
\end{equation}
As $x_{\lambda}\alpha^{-1}-1<Ck^{-d_{1}}$ holds for some $C>0$,
there is an explicitly computable $C_{1}>0$ such that $1+\tilde{\Delta}(x_{\lambda},\alpha)<C_{1}\varepsilon(k)$.
Hence, we can estimate the left hand side of (\ref{eq: proof of the main theorem equation in first case})
from below by 
\[
-C\varepsilon(k)-\frac{\iota}{k}+\frac{\log\frac{4b+6}{9\lambda}}{8\log^{2}(x_{b+1})}-\frac{5\varepsilon(k)}{\log(k)}\sigma_{1,b+1}.
\]
Using the bounds $b+2\leq C_{3}\log^{d_{3}}k$, $\iota<C_{2}k^{1-d_{2}}$
with some $C_{2},C_{3}>0$ yields that it suffices to prove that 
\begin{equation}
\frac{\log\frac{4b+6}{9\lambda}}{8\log^{2}(x_{b+1})}-\frac{5}{4}\varepsilon(k)C_{3}^{2}\log^{2d_{3}-1}(k)-C_{1}\varepsilon(k)-\frac{C_{2}}{k^{d_{2}}}>0\label{eq: fininal inequality in the case lambda greater or equal 1}
\end{equation}
is positive. This is certainly true for sufficiently large $k$ if
we can establish that $\frac{4b+6}{9\lambda}$ exceeds $1$ eventually.
Since for a $P^{*}$-integer $\gamma\geq1$ implies $\beta\geq p_{\varphi\left(k\right)}$,
we conclude from Remark \ref{rem: growth of nth prime and estiamte for prime conting function from below}
that 
\[
\beta>\varphi\left(k\right)\log\varphi(k)\gg k\frac{\log k}{\log\log k}.
\]
Hence, $b$ can be assumed to be arbitrarily large, as desired. Now
let $\lambda=0$. Applying Corollary \ref{cor: estimates for the growth the sum of wj plus or minus}
with $a=1$, and $b$ as before, we deduce from (\ref{eq: S of k should exceed this quantity})
that it suffices to check whether 
\[
E_{0,-}\left(k\right)-\frac{\pi(\alpha)+\iota}{k}+\frac{\log\frac{4b+6}{9}}{8\log^{2}(x_{b+1})}-5\frac{\varepsilon(k)}{\log(k)}\sigma_{1,b+1}>0.
\]
Since $\pi(\alpha)<C_{1}k^{1-d_{1}}$, $\iota<C_{2}k^{1-d_{2}}$ and
$\sigma_{1,b+1}\leq C_{3}^{2}\log^{2d_{3}-1}k$ we see that we need
to check
\begin{equation}
E_{0,-}\left(k\right)+\frac{\log\frac{4b+6}{9}}{8\log^{2}(x_{b+1})}-5\varepsilon(k)C_{3}^{2}\log^{2d_{3}-1}k-C_{1}k^{-d_{1}}-C_{2}k^{-d_{2}}>0,\label{eq: final inequlatity in the case lambda equals zero}
\end{equation}
which is satisfied for sufficiently large $k$. This proves the claim
in the cases (i) or (ii). In the case (iii) or (iv), we write $\alpha=x_{\lambda}-\Delta$
for some $0<\Delta=O(k^{1-d_{1}})$. In comparison to $S(k)$ in the
cases (i) and (ii), we have to add the additional expression $E=\pi(x_{\lambda}+\Delta)-\pi(x_{\lambda})-(\pi(x_{\lambda})-\pi(x_{\lambda}-\Delta))$
to the former $S(k)$. One checks easily that $E=O(x_{\lambda}\varepsilon(x_{\lambda}))$.
Hence, $E$ can not effect the sign of $S(k)$ for large $k$ in the
cases (i) and (ii) since it's order is lower than the order of $S(k)$,
as we see by considering the terms in (\ref{eq: fininal inequality in the case lambda greater or equal 1})
and (\ref{eq: final inequlatity in the case lambda equals zero}).
This completes the proof.
\end{proof}
Using the above proof we can state explicit bounds on certain kinds
of $P^{*}$-integers. 
\begin{cor}
Let $b+2\leq C_{3}\log^{d_{3}}k$, $\iota<C_{2}k^{1-d_{2}}$ with
some $C_{2},C_{3}>0$. Under the assumptions of Theorem \ref{thm: main result general Pinterger finite}
there is an effectively computable number $C_{0}>0$ such that every
natural number $k\geq C_{0}$ satisfying (\ref{eq: fininal inequality in the case lambda greater or equal 1})
if $\lambda\geq1$, or (\ref{eq: final inequlatity in the case lambda equals zero})
if $\lambda=0$ is not such a $P(\alpha,\beta,\gamma,\iota)$-integer.\end{cor}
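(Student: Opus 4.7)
The plan is to observe that this corollary is essentially a constructive reading of the proof of Theorem \ref{thm: main result general Pinterger finite}: every invocation of ``for sufficiently large $k$'' in that argument gets replaced by a concrete threshold, and the corollary records the logical contrapositive of the positivity of $S(k)$.

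First, recall that if $k$ is a $P(\alpha,\beta,\gamma,\iota)$-integer then $|A_1|$ and $|A_2|$ differ by at most $\iota$, so $S(k) = |A_1| - |A_2| - \iota \le 0$. Hence, to exclude $k$ from the list of $P^{*}$-integers it suffices to verify $S(k) > 0$. The proof of Theorem \ref{thm: main result general Pinterger finite} already established that $S(k)$ is bounded below by the left-hand side of (\ref{eq: fininal inequality in the case lambda greater or equal 1}) when $\lambda \ge 1$, and by the left-hand side of (\ref{eq: final inequlatity in the case lambda equals zero}) when $\lambda = 0$, \emph{provided} $k$ is large enough that the preparatory results can be invoked.

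The effective threshold $C_0$ is obtained by intersecting the domains of validity of the preparatory results used in the proof of Theorem \ref{thm: main result general Pinterger finite}: (a) $C_0 \ge 2\,953\,652\,287$ so that Lemma \ref{lem: estimate for Ejik} (which subsumes Remark \ref{rem: growth of nth prime and estiamte for prime conting function from below}(2)) holds for every $j \ge 0$; (b) $C_0 \ge e^{4}$ so that Lemma \ref{lem: growth of Eijk prime} applies; (c) $C_0$ large enough that the bracketed error $1+\tilde{\Delta}(x_\lambda,\alpha) \le C_1 \varepsilon(k)$, which follows from $x_\lambda \alpha^{-1} - 1 \ll k^{-d_1}$, holds with an explicit $C_1$; and (d) $C_0$ large enough that in cases (iii), (iv) of Table \ref{tab:table-label cases} the extra term $E = O(x_\lambda \varepsilon(x_\lambda))$ is dominated by the $\varepsilon(k)$-contributions already present in (\ref{eq: fininal inequality in the case lambda greater or equal 1}) and (\ref{eq: final inequlatity in the case lambda equals zero}). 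Each of (a)--(d) is effective because all underlying estimates in Lemma \ref{lem: estimate of chebyshev function vs. x}, Remark \ref{rem: growth of nth prime and estiamte for prime conting function from below}, and Corollaries \ref{cor: estimate from below for the growth main term} and \ref{cor: estimates for the growth the sum of wj plus or minus} come with explicit constants.

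The only genuinely quantitative point is (d): the absorption of the correction term $E$ from cases (iii) and (iv). Since $E$ arises from two applications of Lemma \ref{lem: estimate of chebyshev function vs. x}, its implied constant is effectively computable, and the absorption goes through once $k$ exceeds an explicit threshold. No other step presents a real obstacle; the proof of Theorem \ref{thm: main result general Pinterger finite} was already quantitative in spirit, and it remains only to name the constants it left implicit.
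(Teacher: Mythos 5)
Your proposal is correct and takes essentially the same route as the paper, which in fact gives no separate proof of this corollary and simply notes it follows from the argument for Theorem~\ref{thm: main result general Pinterger finite}; you reconstruct exactly the intended reasoning, namely that a $P^{*}$-integer forces $S(k)\le 0$, and that the chain Lemma~\ref{lem: estimate for Ejik}, Corollary~\ref{cor: estimate from below for the growth main term}, and Corollary~\ref{cor: estimates for the growth the sum of wj plus or minus} bounds $S(k)$ from below by the left-hand side of (\ref{eq: fininal inequality in the case lambda greater or equal 1}) or (\ref{eq: final inequlatity in the case lambda equals zero}) once $k$ clears the explicit thresholds $k\ge 2\,953\,652\,287$, $k\ge e^4$, the validity range of $1+\tilde{\Delta}(x_\lambda,\alpha)\le C_1\varepsilon(k)$, and the absorption of the $E$-correction in cases (iii)/(iv). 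Your identification of $C_0$ as the maximum of these thresholds, all of which are effective because every ingredient (Lemma~\ref{lem: estimate of chebyshev function vs. x}, Remark~\ref{rem: growth of nth prime and estiamte for prime conting function from below}, etc.) carries explicit constants, is precisely what the paper leaves implicit.
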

\begin{rem}
Let us add some further comments:\end{rem}
\begin{itemize}
\item It poses no general problem to modify our arguments to study the distribution
of other sequences in residue classes, since we essentially employed
the euclidean structure, properties of the norm function, and the
growth properties of the prime counting function. E.g. one can derive
similar results about the distribution of numbers or elements with
$s$-prime factors where $s$ is a fixed natural number, while considering
semi-groups with the just mentioned properties. 
\item Moreover, one could slightly relax the growth restriction in Theorem
\ref{thm: main result general Pinterger finite} and still conclude
finiteness of such $P^{*}$-integers. However, this would only complicate
the technical aspects of the proof and bring no deeper insight.
\end{itemize}
\newpage{}
\bibliographystyle{plaindin}

\end{document}